\DeclareFontFamily{OML}{rsfs}{\skewchar\font'177}
\DeclareFontShape{OML}{rsfs}{m}{n}{ <5> <6> rsfs5 <7> <8> <9> rsfs7
  <10> <10.95> <12> <14.4> <17.28> <20.74> <24.88> rsfs10 }{}
\DeclareMathAlphabet{\mathfs}{OML}{rsfs}{m}{n}
\newtheorem{theorem}{Theorem}
\newtheorem{lemma}[theorem]{Lemma}
\newtheorem{corollary}[theorem]{Corollary}
\theoremstyle{definition}
\theoremstyle{remark}
\newtheorem{remark}[theorem]{\bf Remark}
\numberwithin{equation}{section}
\numberwithin{theorem}{section}
\newcommand{\intav}[1]{\mathchoice {\mathop{\vrule width 6pt height 3 pt depth  -2.5pt
\kern -8pt \intop}\nolimits_{\kern -6pt#1}} {\mathop{\vrule width
5pt height 3  pt depth -2.6pt \kern -6pt \intop}\nolimits_{#1}}
{\mathop{\vrule width 5pt height 3 pt depth -2.6pt \kern -6pt
\intop}\nolimits_{#1}} {\mathop{\vrule width 5pt height 3 pt depth
-2.6pt \kern -6pt \intop}\nolimits_{#1}}}
\newcommand{\intavl}[1]{\mathchoice {\mathop{\vrule width 6pt height 3 pt depth  -2.5pt
\kern -8pt \intop}\limits_{\kern -6pt#1}} {\mathop{\vrule width 5pt
height 3  pt depth -2.6pt \kern -6pt \intop}\nolimits_{#1}}
{\mathop{\vrule width 5pt height 3 pt depth -2.6pt \kern -6pt
\intop}\nolimits_{#1}} {\mathop{\vrule width 5pt height 3 pt depth
-2.6pt \kern -6pt \intop}\nolimits_{#1}}}
\newcommand{\ve}{\varepsilon}
\renewcommand{\P}[1]{{\mathbb{P}}\left[{#1}\right]}
\newcommand{\EE}[2]{{\mathbb{E}}\left[{#1}|{#2}\right]}
\begin{document}

\title[Graph-based P\'{o}lya's urn: the linear case]{Graph-based P\'{o}lya's urn: completion of the linear case}

\author[Yuri Lima]{Yuri Lima}
\thanks{The author was supported by the Brin Fellowship.}
\address{Department of Mathematics, University of Maryland, College Park, MD 20742, USA.}
\email{yurilima@gmail.com}

\subjclass[2010]{Primary: 60K35. Secondary: 37C10.}

\date{\today}

\keywords{Gradient-like system, P\'{o}lya's urn, reinforcement, stochastic approximation algorithms}

\begin{abstract}
Given a finite connected graph $G$, place a bin at each vertex. Two bins are called a pair if they
share an edge of $G$. At discrete times, a ball is added to each pair of bins.
In a pair of bins, one of the bins gets the ball with probability proportional to its current
number of balls. This model was introduced in \cite{benaim2013generalized}. When $G$ is not balanced bipartite, the
proportion of balls in the bins converges to a point $w(G)$ almost surely \cite{benaim2013generalized,chen2013generalized}.

We prove almost sure convergence for balanced bipartite graphs: the possible limit
is either a single point $w(G)$ or a closed interval $\mathfs J(G)$.
\end{abstract}

\maketitle

\section{Introduction}

Let $G=(V,E)$ be a finite connected graph with $V=[m]=\{1,\ldots,m\}$ and $|E|=N$, and place a bin at vertex $i$ with $B_i(0)\ge 1$ balls.
Consider a random process of adding $N$ balls to the bins at each step, according to the following
law: if the numbers of balls after step $n-1$ are $B_1(n-1),\ldots,B_m(n-1)$, step $n$ consists
of adding, to each edge $\{i,j\}\in E$, one ball either to $i$ or to $j$, and the probability that the ball
is added to $i$ is
\begin{align}\label{transition-probability}
\P{i\text{ is chosen among }\{i,j\}\text{ at step }n}=\dfrac{B_i(n-1)}{B_i(n-1)+B_j(n-1)}\,\cdot
\end{align}
We call this model a {\em graph-based P\'olya's urn}.

\medskip
Call $G$ {\em bipartite} if there is a partition $V=A\cup B$ such that
for every $\{i,j\}\in E$ either $i\in A,j\in B$ or $i\in B,j\in A$. If $\#A=\#B$ we call $G$ {\em balanced bipartite},
and if $\#A\neq\#B$ we call it {\em unbalanced bipartite}.   

Let $N_0=\sum_{i=1}^m B_i(0)$ be the initial number of balls, let
$x_i(n)=\frac{B_i(n)}{N_0+nN}$, $i\in[m]$, and let $x(n)=(x_1(n),\ldots,x_m(n))$, the proportion of balls in the bins
after step $n$.

\begin{theorem}\label{main-theorem}
If $G$ is a finite, connected, balanced bipartite graph, then there is a closed interval $\mathfs J=\mathfs J(G)$
such that $x(n)$ converges to a point of $\mathfs J$ almost surely.
\end{theorem}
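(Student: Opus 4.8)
The plan is to view the process as a stochastic approximation scheme and to analyze its limiting ODE. First I would rewrite the recursion for $x(n)$. Since exactly $N$ balls are added at each step, the total is $N_0+nN$, and writing $\Delta B_i(n)$ for the number of balls received by bin $i$ at step $n$ a direct computation gives
\begin{equation*}
x(n)-x(n-1)=\frac{1}{N_0+nN}\bigl(F(x(n-1))+U_n\bigr),
\end{equation*}
where $F_i(x)=\sum_{j\sim i}\frac{x_i}{x_i+x_j}-N x_i$ and $U_n=\Delta B(n)-\EE{\Delta B(n)}{\mathcal F_{n-1}}$ is a bounded martingale difference. The step sizes are $\sim 1/n$, nonsummable but square-summable, so by the ODE method of stochastic approximation (as in \cite{benaim2013generalized}) the limit set of $(x(n))$ is almost surely a compact connected internally chain transitive set of the flow $\dot x=F(x)$ on the simplex $\Delta=\{x\ge 0,\ \sum_i x_i=1\}$. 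The problem is thereby reduced to understanding this flow.

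Second, I would exhibit a conserved quantity. Let $V=A\cup B$ be the bipartition and set $c_i=+1$ for $i\in A$, $c_i=-1$ for $i\in B$, and $h(x)=\sum_i c_i\log x_i=\log\frac{\prod_{A}x_i}{\prod_{B}x_j}$. A short computation shows
\begin{equation*}
\nabla h(x)\cdot F(x)=\sum_{\{i,j\}\in E}\frac{c_i+c_j}{x_i+x_j}-N\sum_i c_i.
\end{equation*}
Because every edge joins $A$ to $B$ we have $c_i+c_j=0$ on each edge, and because $G$ is balanced $\sum_i c_i=\#A-\#B=0$; hence $h$ is constant along the flow. Thus $\Delta$ is foliated by the invariant leaves $\{h=\mathrm{const}\}$ (in the single-edge case $m=2$ these are points and every point is an equilibrium, already giving $\mathfs J=[0,1]$). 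I would then define $\mathfs J$ as the equilibrium set of $F$ and show, adapting the gradient-like analysis of \cite{benaim2013generalized}, that on each leaf the restricted flow is gradient-like with a single globally attracting equilibrium; letting this equilibrium vary with the value of $h$ traces out $\mathfs J$ as a closed interval transverse to the foliation.

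Third comes the almost sure convergence. The crucial point is that, since $\nabla h\cdot F\equiv 0$, a Taylor expansion gives
\begin{equation*}
h(x(n))-h(x(n-1))=\frac{1}{N_0+nN}\,\nabla h(x(n-1))\cdot U_n+O\!\left(\frac{1}{n^2}\,\|\nabla^2 h\|\right).
\end{equation*}
The first term is a martingale difference with conditional variance $O(1/n^2)$, and the remainder is summable as long as the trajectory stays in a region $\{x_i\ge\delta\}$ where $\nabla^2 h$ is bounded; by a Robbins–Siegmund/martingale convergence argument $h(x(n))$ then converges almost surely, possibly to $\pm\infty$, the infinite values corresponding to convergence to an endpoint of $\mathfs J$ on $\partial\Delta$. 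Consequently the limit set is almost surely contained in a single leaf $\{h=h_\infty\}$.

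Finally I combine the ingredients: the limit set is internally chain transitive and contained in one leaf on which the gradient-like flow has a unique critical point $p^\ast$ attracting the whole leaf. Since the restricted flow admits a strict Lyapunov function, any internally chain transitive subset consists of equilibria lying on a single level of that function, and as $p^\ast$ is the only equilibrium in the leaf the limit set reduces to $\{p^\ast\}$; hence $x(n)\to p^\ast\in\mathfs J$ almost surely. The main obstacle I expect is the structural claim of the second step: proving that the restricted flow on each leaf is genuinely gradient-like with a unique attracting equilibrium, i.e. constructing or adapting a strict Lyapunov function in this degenerate, foliated balanced case, together with the control of the dynamics near $\partial\Delta$ needed to justify the martingale argument and the endpoint analysis of $\mathfs J$.
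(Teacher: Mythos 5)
Your route is genuinely different from the paper's, and its central computation is correct: with $c_i=+1$ on $A$ and $c_i=-1$ on $B$, the function $h(x)=\sum_i c_i\log x_i$ satisfies $\nabla h\cdot F\equiv 0$ (bipartiteness kills every edge term, balancedness kills $\sum_i c_i$), so the level sets of $h$ form an \emph{explicit} invariant foliation transverse to $\mathfs J$. The paper gets a foliation abstractly, from normal hyperbolicity, after proving that all transverse eigenvalues of $DF$ on ${\rm int}(\mathfs J)$ are negative, and it prevents drift along $\mathfs J$ by combining the shadowing estimate (\ref{estimate-convergence}) with the iteration Lemma \ref{lemma-iteration}; you propose instead to prevent drift by a martingale argument for $h$. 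If completed, your proof would be more elementary (no invariant-manifold theory, no rate estimate). However, two of your steps have genuine gaps.

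The serious one is in the martingale step. The quasi-martingale decomposition of $h(x(n))$ is valid only while the trajectory stays in a set $\{x_i\ge\delta\ \forall i\}$: near $\partial\Delta$ both $\nabla h$ and $\nabla^2 h$ blow up, and since a coordinate can be as small as order $1/n$ (each bin holds at least one ball out of $\sim nN$), the increments of $h$ there are $O(1)$, not $O(1/n)$ --- neither the martingale variance nor the Taylor remainder is summable. Your phrase ``possibly to $\pm\infty$, the infinite values corresponding to convergence to an endpoint of $\mathfs J$'' presupposes that $h(x(n))$ converges in $[-\infty,+\infty]$, which is exactly what fails to be proved on the bad event: a priori the limit set could be a subinterval of $\mathfs J$ joining an interior point $a$ to an endpoint $p\in\partial\Delta$, with $h(x(n))$ oscillating between $h(a)$ and $+\infty$, and Robbins--Siegmund gives nothing because every excursion toward $p$ leaves the region where the estimates hold. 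Closing this requires an extra localization, e.g.: almost surely the trajectory is eventually within $\varepsilon$ of its limit set, so such an oscillation forces infinitely many, arbitrarily late, crossings of a fixed compact set $\{h_1\le h\le h_2\}\cap\{d(\cdot,\mathfs J)\le\varepsilon\}$ contained in the open simplex; there the estimates are uniform, Doob's maximal inequality bounds the probability of a crossing starting after time $n$ by $O(1/n)$, hence almost surely only finitely many occur; then take a countable union over rational $h_1<h_2$. This added argument is the analogue of the paper's ``trap'': once $X(t)$ enters the neighborhood $U$ of an interior subinterval, Lemma \ref{lemma-iteration} keeps it there and makes $\pi(X(t+kT))$ Cauchy.

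The second gap is structural: your step two presupposes that every leaf contains an equilibrium, which holds exactly when $\Lambda_{[m]}\neq\emptyset$. For balanced bipartite graphs this can fail --- see graph (e) of Figure \ref{figure-examples}, where the unique non-unstable equilibrium is $(0,0,0,0,\tfrac12,\tfrac12)$ --- and then \emph{all} equilibria lie on $\partial\Delta$, no leaf contains any equilibrium, the restricted flows have no attracting fixed point, and the set you propose to trace out is empty while $\mathfs J$ is a single boundary point. This case needs a separate (easier) treatment: $\Lambda$ is finite, the limit set is connected, and Theorem \ref{theorem-zero-probability-unstable} excludes the unstable equilibria. Relatedly, defining $\mathfs J$ as ``the equilibrium set of $F$'' is not correct even in the nondegenerate case, since $\Lambda$ generally contains additional unstable equilibria on $\partial\Delta$; the right object is the set of \emph{non-unstable} equilibria. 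Likewise ``single globally attracting equilibrium'' per leaf is an overstatement (stable manifolds of boundary unstable equilibria can meet a leaf), though your final argument only needs the weaker, true statement that $\Lambda$ meets each leaf in at most one interior point.
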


In some cases $\mathfs J$ is a singleton. When it is not, the point to which $x(n)$ converges 
depends on the realization of the process $x(n)$. 

\medskip
Graph-based P\'olya's urns were introduced in \cite{benaim2013generalized}:
for a fixed $\alpha>0$, one of the bins gets the
ball with probability proportional to the $\alpha$ power of its current number of balls.
When $\alpha=1$ the model is (\ref{transition-probability}),
hence we call it the {\em linear case}.

\medskip
Graph-based P\'olya's urns extend the classical P\'olya's urn and many of its variants, see \cite{pemantle2007survey}.
E.g. if $G$ is the complete graph with $m$ vertices then the model is a P\'olya's urn with $m$ colors.

\medskip
$\mathfs J$ depends on the structure of the graph. If $G$ is not bipartite then $\mathfs J$ is 
a singleton \cite{benaim2013generalized}. This was extended to unbalanced bipartite graphs \cite{chen2013generalized}.
The remaining case, when $G$ is balanced bipartite, was conjectured in \cite[Conjecture 5.4]{chen2013generalized}.
Theorem \ref{main-theorem} confirms it, and completes the description of possible limits.

\begin{corollary}\label{main-corollary}
If $G$ is a finite, connected graph, then there is a closed interval $\mathfs J=\mathfs J(G)$
such that $x(n)$ converges to a point of $\mathfs J$ almost surely. If $G$ is not balanced bipartite, 
then $\mathfs J$ is a singleton.
\end{corollary}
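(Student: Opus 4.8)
The plan is to derive Corollary \ref{main-corollary} by assembling Theorem \ref{main-theorem} with the two cases already settled in the literature, via a trichotomy on the combinatorial type of $G$. First I would observe that every finite connected graph $G$ falls into exactly one of three mutually exclusive and exhaustive classes: $G$ is not bipartite, $G$ is unbalanced bipartite, or $G$ is balanced bipartite. Indeed, bipartiteness is a dichotomy, and among bipartite graphs the comparison of $\#A$ with $\#B$, for the essentially unique bipartition $V=A\cup B$ of a connected bipartite graph, splits the bipartite case into the balanced and unbalanced subcases. In particular, ``$G$ is not balanced bipartite'' is precisely the union of the first two classes.

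For the non-balanced-bipartite classes I would invoke the existing convergence results. If $G$ is not bipartite, \cite{benaim2013generalized} shows that $x(n)$ converges almost surely to a single point $w(G)$; if $G$ is unbalanced bipartite, the same almost sure convergence to a point $w(G)$ is proved in \cite{chen2013generalized}. In either case I set $\mathfs J(G)=\{w(G)\}$ and note that a single point is a degenerate closed interval, so the conclusion holds with $\mathfs J$ a singleton, exactly as asserted in the second sentence of the corollary. For the remaining class, $G$ balanced bipartite, I would apply Theorem \ref{main-theorem} directly: it furnishes the closed interval $\mathfs J=\mathfs J(G)$ together with the almost sure convergence of $x(n)$ to a point of $\mathfs J$. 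Combining the three cases yields, for every finite connected $G$, a closed interval $\mathfs J(G)$ to a point of which $x(n)$ converges almost surely, and $\mathfs J$ is a singleton whenever $G$ is not balanced bipartite.

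The corollary is essentially bookkeeping once Theorem \ref{main-theorem} is in hand, so there is no genuine analytic obstacle at this stage; the only points requiring care are purely combinatorial. I would want to verify that the three classes are indeed exhaustive and disjoint, that the bipartition of a connected bipartite graph is unique up to swapping the two sides so that the balanced/unbalanced distinction is well defined, and that a singleton is admissible as a ``closed interval'' in the sense intended by the statement. The substantive content lives entirely in Theorem \ref{main-theorem}, whose proof of almost sure convergence for balanced bipartite graphs is the hard part that the corollary merely packages together with the prior results of \cite{benaim2013generalized,chen2013generalized}.
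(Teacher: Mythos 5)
Your proposal is correct and follows essentially the same route as the paper: a trichotomy into non-bipartite, unbalanced bipartite, and balanced bipartite graphs, with the first two cases settled by \cite{benaim2013generalized,chen2013generalized} and the last by Theorem \ref{main-theorem}. The only cosmetic difference is that the paper reproduces the proof of the non-balanced-bipartite case (Section 4) for completeness rather than merely citing it, but the logical structure is identical.
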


Additionally to being natural generalizations of P\'olya's urns, graph-based P\'olya's urns model some competing networks
\cite{benaim2013generalized}: Imagine there are 3 companies,
denoted by M, A, G. Each company sells two products.
M sells {\sf OS} and {\sf SE}, A sells {\sf OS} and {\sf SP}, G sells {\sf SE} and {\sf SP}.
Each pair of companies compete
on one product. The companies try to use their global size and reputation to boost sales.
Which company will sell more products in the long term? The interaction between the companies form a
triangular network: a vertex represents a company and an edge represents a product. Under further simplifications, graph-based
P\'olya's urns describe in broad strokes the long-term evolution of such competition.

\medskip
Another example comes from a repeated game in which agents improve their
skill by gaining experience \cite{skyrms2000dynamic}. The interaction network between agents is modeled by a graph.
At each round a pair is competing for a ball. A competitor improves his skill with time,
and the number of balls in his bin represents his skill level.
See \cite[\S1]{benaim2013generalized} and references therein for more applications.

\medskip
The sequence $x(n)$ is a {\em stochastic approximation algorithm}. These are small perturbations of a vector field.
In many cases, there is a relation between the limit set of $x(n)$ and the equilibria of the vector field.
For graph-based P\'olya's urns, the vector field is gradient-like \cite[Lemma 4.1]{benaim2013generalized},
thus the limit set of $x(n)$ is almost surely contained in the equilibria set of the vector field.

\medskip
Since limit sets are connected, if there are finitely many equilibria then $x(n)$ converges almost surely to
some equilibrium.
Some of them are unstable, and some are not (see \S\ref{section-saa}).
The probability that $x(n)$ converges to an unstable equilibrium is zero \cite[Lemma 5.2]{benaim2013generalized},
see also Theorem \ref{theorem-zero-probability-unstable} here. Hence at least one equilibrium
is non-unstable. Complementary to this, non-unstable equilibria generate Lyapunov functions
\cite[Lemmas 3.1 and 3.2]{chen2013generalized}. If $G$ is not balanced bipartite,
this implies that there is at most one non-unstable equilibrium.
Combined, these two arguments imply the second part of Corollary \ref{main-corollary}
\cite[Theorem 1.1]{chen2013generalized}.

\medskip
If there are infinitely many non-unstable equilibria, then $x(n)$ could wander around without converging
to any of them. To prove convergence, one needs to understand the attracting/repelling properties of the equilibria.
E.g. if $G$ is regular and balanced bipartite then the set of non-unstable equilibria is an interval and the eigenvalues
in transverse directions have negative real part \cite[Lemma 10.1]{benaim2013generalized}, thus
$x(n)$ converges almost surely to a point of the interval \cite[Theorem 1.2]{chen2013generalized}.
Here is a heuristic explanation: the orbits of the vector field converge exponentially fast to the interval,
and the random model converges exponentially fast to its limit set \cite[Lemma 4.1]{chen2013generalized}.
This prevents $x(n)$ of wandering around the interval.

\medskip
We prove that for balanced bipartite graphs the set of non-unstable equilibria is an
interval $\mathfs J$, possibly reduced to a point. When it is not a point, we prove that
all eigenvalues are real, and those in transverse directions to $\mathfs J$ are negative.
Under these conditions, we apply the methods of \cite[Theorem 1.2]{chen2013generalized}
to prove that $x(n)$ converges to a point of $\mathfs J$ almost surely. This gives
Theorem \ref{main-theorem}.

\section{Stochastic approximation algorithms}\label{section-saa}

Graph-based P\'olya's urn are an example of stochastic approximation algorithms \cite{benaim2013generalized}.
In this section we recall some previous results of \cite{benaim2013generalized,chen2013generalized} and explain
how a graph-based P\'olya's urn is related to a vector field.

\medskip
\noindent
{\sc Stochastic approximation algorithm:} A {\em stochastic approximation algorithm} is a discrete time process $\{x(n)\}_{n\geq 0}\subset\mathbb R^m$
of the form
\begin{align}\label{stochastic-approximation-algorithm}
x(n+1)-x(n)=\gamma_n\left[F(x(n))+u_n\right]
\end{align}
where $\{\gamma_n\}_{n\ge 0}$ is a sequence of nonnegative scalar gains, $F:\mathbb R^m\to\mathbb R^m$ is a vector field,
and $u_n\in\mathbb R^m$ is a random vector that depends on $x(n)$ only.

\medskip
\noindent
Let $\mathfs F_n$ be the sigma-algebra generated by the process up to step $n$. Since $u_n$ only depends on $x(n)$ we can assume,
after changing $F$, that $\EE{u_n}{\mathfs F_n}=0$.

\medskip
Graph-based P\'olya's urns are stochastic approximation algorithms with $\gamma_n=\frac{1}{\frac{N_0}{N}+(n+1)}$
and vector field $F$ defined by the equations:
\begin{eqnarray}\label{vector-field}
\left\{
\begin{array}{rcl}
\dfrac{dv_1}{dt}&=&-v_1+\dfrac{1}{N}\displaystyle\sum_{j\sim 1}\frac{v_1}{v_1+v_j} \\
&\vdots&\\
\dfrac{dv_m}{dt}&=&-v_m+\dfrac{1}{N}\displaystyle\sum_{j\sim m}\frac{v_m}{v_m+v_j}\,\cdot
\end{array}\right.
\end{eqnarray}
See \cite[\S3.2]{benaim2013generalized}.

\medskip
\noindent
{\sc Domain of $F$:} Fix $c<\frac{1}{N}$,
and let $\Delta$ be the set of vectors $(v_1,\ldots,v_m)\in\mathbb R^m_{\geq 0}$ with
$\sum_{i=1}^m v_i=1$ and $v_i+v_j\ge c$ for all $\{i,j\}\in E$.
The vector field $F:\Delta\rightarrow T\Delta$ is  Lipschitz, and it induces a semiflow \cite[Lemma 2.1]{benaim2013generalized}.

\subsection*{The vector field $F$ is gradient-like}

This was proved in \cite[Lemma 4.1]{benaim2013generalized}.

\medskip
\noindent
{\sc Equilibria set:} $v\in\Delta$ is called an {\em equilibrium} if $F(v)=0$. $v$ is called {\it unstable} if
$DF(v)$ has an eigenvalue with negative real part, and {\em non-unstable} otherwise.
The {\em equilibria set} is $\Lambda=\{v\in\Delta:v\text{ is equilibrium}\}$.

\medskip
\noindent
{\sc Lyapunov function:} Let $U\subseteq\Delta$. A continuous map $L:\Delta\to\mathbb R$ is called a
{\it Lyapunov function for $U$} if it is strictly monotone along any integral curve of $F$ outside $U$.
If $U=\Lambda$, we call $L$ a {\em strict Lyapunov function} and $F$ {\em gradient-like}.

\medskip
Let $L:\Delta\rightarrow\mathbb R$ be the function
\begin{align}\label{definition-strict-lyapunov-function}
L(v_1,\ldots,v_m)=-\sum_{i=1}^m v_i+\dfrac{1}{N}\sum_{\{i,j\}\in E}\log{(v_i+v_j)}.
\end{align}
$L$ is a strict Lyapunov function for $F$: because $\frac{dv_i}{dt}=v_i\frac{\partial L}{\partial v_i}$,
then
$$
\frac{d}{dt}(L\circ v)=\sum_{i=1}^m \frac{\partial L}{\partial v_i}\frac{dv_i}{dt}=\sum_{i=1}^m v_i\left(\frac{\partial L}{\partial v_i}\right)^2\geq 0.
$$
Equality holds iff $v_i\frac{\partial L}{\partial v_i}=0$ for all $i$ iff $v\in\Lambda$.

\medskip
We divide the singularities according to the faces of $\Delta$. Given $S\subseteq[m]$, let
$\Delta_S=\{v\in\Delta:v_i=0\text{ iff }i\notin S\}$. The restriction $F\restriction_{\Delta_S}$ is a semiflow.
Let $\Lambda_S=\{v\in\Delta_S:\frac{\partial L}{\partial v_i}(v)=0,\forall i\in S\}$.
A direct calculation shows that $\Lambda=\bigcup_{S\subseteq[m]}\Lambda_S$ \cite[Lemma 2.1]{benaim2013generalized}.
Because $L$ is a concave function, so is $L\restriction_{\Delta_S}$, hence $\Lambda_S$ is
the set of maxima of $L\restriction_{\Delta_S}$.

\subsection*{Relation between $\{x(n)\}_{n\geq 0}$ and $F$}

Let $\{\Phi_t\}_{t\geq 0}$ be the semiflow induced by $F$. Let $\tau_n=\sum_{i=0}^n\gamma_i$, and let $\{X(t)\}_{t\geq 0}$ be the interpolation of $\{x(n)\}_{n\geq 0}$: $X(\tau_n)=x(n)$ and $X\restriction_{[\tau_n,\tau_{n+1}]}$ is linear.
Let $d$ be the euclidean distance on $\Delta$.

\begin{theorem}\cite{benaim2013generalized,chen2013generalized}\label{theorem-random-versus-deterministic}
The limit set of $x(n)$ is contained in $\Lambda$ almost surely, and
\begin{equation}\label{estimate-convergence}
\sup_{T>0}\limsup_{t\to+\infty}\frac{1}{t}\log\left(\sup_{0\leq h\leq T}d\big(X(t+h),\Phi_h(X(t))\big)\right)\leq -\frac{1}{2}\cdot
\end{equation}
\end{theorem}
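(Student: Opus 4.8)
The plan is to treat both assertions through the classical ODE method for stochastic approximation, exploiting that $\Delta$ is compact, that the noise $u_n$ is bounded (each ball contributes a bounded increment), and that $F$ is Lipschitz with a strict Lyapunov function $L$. For the first assertion I would verify that the interpolated process $X$ is an \emph{asymptotic pseudotrajectory} of the semiflow $\Phi$: since $\gamma_n\sim 1/n$ satisfies $\gamma_n\to 0$ and $\sum_n\gamma_n=\infty$, and since the centered, bounded martingale increments $\gamma_n u_n$ give $\sum_n\gamma_n^2\,\E{|u_n|^2}<\infty$, the standard criterion applies. For an asymptotic pseudotrajectory whose flow admits a strict Lyapunov function, the limit set is internally chain transitive and hence contained in the set where $L$ is constant along orbits, which by the gradient-like property is exactly $\Lambda$. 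The only model-specific input here is the boundedness of $u_n$.

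For the quantitative estimate \eqref{estimate-convergence}, I would compare $X$ and $\Phi$ over a window $[\tau_n,\tau_n+T]$ by writing, for $\tau_{n+k}-\tau_n\le T$,
\[
X(\tau_{n+k})-\Phi_{\tau_{n+k}-\tau_n}(x(n))=\sum_{i=n}^{n+k-1}\gamma_i u_i+R_{n,k},
\]
where $R_{n,k}$ collects the deterministic discretization error from evaluating $F$ at the grid points $x(i)$ rather than along $\Phi$. Since $F$ is Lipschitz, one step of this error is $O(\gamma_i)$, so $R_{n,k}=O\big(\sum_{i\ge n}\gamma_i^2\big)$, while a Gronwall argument (again using the Lipschitz constant $\ell$ of $F$) bounds $\sup_{0\le h\le T}d(X(\tau_n+h),\Phi_h(x(n)))$ by $e^{\ell T}$ times the supremum over $k$ of the martingale sum plus $R_{n,k}$. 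Because $T$ is fixed, the factor $e^{\ell T}$ is harmless after taking $\tfrac1t\log$, so the whole problem reduces to bounding the fluctuation $M_n(T)=\sup_k\big|\sum_{i=n}^{n+k-1}\gamma_i u_i\big|$.

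Here the exponent $-\tfrac12$ emerges. Writing $S_k=\sum_{i=n}^{n+k-1}\gamma_i u_i$, the increments are orthogonal and bounded, so Doob's $L^2$ maximal inequality gives $\E{M_n(T)^2}\lesssim\sum_{i\ge n}\gamma_i^2\sim 1/n$; since $\tau_n\sim\log n$ this is $\sim e^{-\tau_n}$, an $L^2$ size of order $e^{-t/2}$, and the remainder $R_{n,k}=O(e^{-t})$ is of smaller order and does not affect the exponent. The hard part is upgrading this second-moment bound to the sharp almost sure $\limsup$: a bare Chebyshev estimate yields tails of order $n^{-2\varepsilon}$ at level $e^{-(1/2-\varepsilon)\tau_n}$, which are not summable. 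To fix this I would use the boundedness of $u_n$ through a maximal Hoeffding inequality: $e^{\theta S_k}$ is a submartingale, so Doob's inequality with Hoeffding's lemma yields $\P{M_n(T)\ge\lambda}\le 2\exp\big(-c_0\lambda^2/\sum_{i\ge n}\gamma_i^2\big)$ for a constant $c_0>0$ depending only on $\sup_n\|u_n\|_\infty$. At $\lambda=e^{-(1/2-\varepsilon)\tau_n}$, using $\sum_{i\ge n}\gamma_i^2\sim e^{-\tau_n}$, this is $\le 2\exp(-c\,e^{2\varepsilon\tau_n})\approx 2\exp(-c\,n^{2\varepsilon})$, now summable, so Borel--Cantelli along the grid $\{\tau_n\}$ gives $M_n(T)\le e^{-(1/2-\varepsilon)\tau_n}$ for all large $n$ almost surely.

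Finally I would pass from the grid to continuous time using the Lipschitz continuity of $X$ and $\Phi$ (which perturbs the bound only by a factor controlled by $\gamma_n\to 0$), conclude $\limsup_{t\to\infty}\tfrac1t\log\big(\sup_{0\le h\le T}d(X(t+h),\Phi_h(X(t)))\big)\le-(1/2-\varepsilon)$ for each fixed $T$ and $\varepsilon>0$, and then let $\varepsilon\downarrow 0$. Since the $e^{\ell T}$ factor disappears under $\tfrac1t\log$, the exponent is uniform in $T$, so taking the supremum over $T$ preserves $-\tfrac12$, which is the claim. The only genuinely model-specific ingredients are the boundedness of the noise and the Lipschitz regularity of $F$, both already recorded in the excerpt.
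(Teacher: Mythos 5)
Your proposal is correct and follows essentially the same route as the paper, which does not prove this theorem itself but defers to \cite{benaim2013generalized,chen2013generalized}: the first assertion via the asymptotic-pseudotrajectory and internally-chain-transitive-set machinery of \cite{benaim1999dynamics}, and the second via the shadowing estimate \cite[Prop.~8.3]{benaim1999dynamics}, whose proof is exactly your Gronwall-plus-maximal-Hoeffding computation producing the log-convergence rate $\tfrac12\limsup_n \log\gamma_n/\tau_n=-\tfrac12$. Your quantitative part (martingale decomposition, $O(\sum_{i\ge n}\gamma_i^2)=O(e^{-t})$ discretization error, Azuma--Hoeffding with Doob, Borel--Cantelli along the grid, and the observation that the $e^{\ell T}$ factor dies under $\tfrac1t\log$) is a faithful reconstruction of that cited argument.

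One step deserves a caveat. The implication ``limit set is internally chain transitive $+$ strict Lyapunov function $\Rightarrow$ limit set $\subseteq\Lambda$'' is false in general: the theorem you are implicitly invoking (Proposition 6.4 of \cite{benaim1999dynamics}) additionally requires that $L(\Lambda)$ have empty interior in $\mathbb R$. This hypothesis is not vacuous here, since $\Lambda$ contains a nondegenerate interval of equilibria precisely in the balanced bipartite case that is the subject of this paper, so one cannot dismiss it as automatic. It does hold, because $\Lambda=\bigcup_{S\subseteq[m]}\Lambda_S$ where each $\Lambda_S$ is the set of maxima of the concave function $L\restriction_{\Delta_S}$; hence $L$ is constant on each $\Lambda_S$ and $L(\Lambda)$ consists of at most $2^m$ values. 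With that one observation added, your argument is complete and matches the proofs behind the paper's citations.
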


\medskip
The first part was proved in \cite[\S3.1 and \S3.2]{benaim2013generalized}. It is an application of the general theory of
stochastic approximation algorithms \cite{benaim1996dynamical,benaim1999dynamics}.
The second part is \cite[Lemma 4.1]{chen2013generalized}.
It follows from shadowing techniques that relate the speed of convergence of the interpolated process and
the vector field \cite[Prop. 8.3]{benaim1999dynamics}. The right hand side of the inequality
is the log-convergence rate $\frac{1}{2}\limsup\frac{\log\gamma_n}{\tau_n}$, which for graph-based P\'olya's urns equals $-\frac{1}{2}$.

\section{Unstable and non-unstable equilibria}\label{section-equilibria}

Write $F=(F_1,\ldots,F_m)$, $F_i=v_i\frac{\partial L}{\partial v_i}$.
Fix $w\in\Lambda_S$, and let $DF(w):T_w\Delta\to T_w\Delta$.
In coordinates $v_1,\ldots,v_m$, $DF(w)$ equals the jacobian matrix
$JF(w)=\left(\frac{\partial F_i}{\partial v_j}\right)_{i,j}$:
\begin{equation}\label{jacobian}
\frac{\partial F_i}{\partial v_j}=\left\{\begin{array}{ll}
v_i\dfrac{\partial ^2L}{\partial v_i\partial v_j}&\text{if }i\not=j,\\
&\\
\dfrac{\partial L}{\partial v_i}+v_i\dfrac{\partial ^2L}{\partial v_i^2} &\text{if }i= j.\\
\end{array}\right.
\end{equation}
Without loss of generality, assume that $S=\{k+1,\ldots, m\}$. Thus
\begin{align}\label{definition jacobian}
JF(w)=\left[
\begin{array}{cc}
A & 0\\
C & B \\
\end{array}
\right]
\end{align}
where $A$ is a $k\times k$ diagonal matrix with $a_{ii}=\frac{\partial L}{\partial v_i}(w)$, $i\in[k]$.

\subsection*{Non-convergence to unstable equilibria}

The spectrum of $JF(w)$ is the union of the spectra of $A$ and $B$. Introduce the inner
product $(x,y)=\sum_{i=k+1}^m x_iy_i/v_i$. $B$ is self-adjoint and negative semidefinite
(by the concavity of $L$), hence the eigenvalues of $B$ are real and nonpositive.
Therefore $JF(w)$ has a real positive eigenvalue iff $a_{ii}>0$ for some $i\in[k]$.
In summary:
\begin{equation}\label{condition-unstable-equilibrium}
w\in\Lambda_S\text{ is unstable}\iff\exists i\not\in S\text{ s.t. }\tfrac{\partial L}{\partial v_i}(w)>0.
\end{equation}

\begin{theorem}\cite[Lemma 5.2]{benaim2013generalized}\label{theorem-zero-probability-unstable}
If $w$ is an unstable equilibrium, then
$$
\mathbb P\left[\lim_{n\to\infty}x(n)=w\right]=0.
$$
\end{theorem}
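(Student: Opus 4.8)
The plan is to reduce the statement to the non-convergence of a single, linearly decoupled coordinate, and then to rule out its convergence by a potential/martingale argument driven by the expanding direction. By the characterization \eqref{condition-unstable-equilibrium}, since $w\in\Lambda_S$ is unstable there is an index $i\notin S$ with $\lambda:=\frac{\partial L}{\partial v_i}(w)>0$; note $w_i=0$. First I would record that the $i$-th coordinate is linearly decoupled and expanding at $w$: from \eqref{jacobian}, because $v_i=w_i=0$ the off-diagonal entries $v_i\frac{\partial^2 L}{\partial v_i\partial v_j}$ of row $i$ vanish and the diagonal entry reduces to $\frac{\partial L}{\partial v_i}(w)=\lambda$, so the $i$-th row of $JF(w)$ is $\lambda\,e_i^{\top}$. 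In the block form \eqref{definition jacobian} this is exactly the eigenvalue $a_{ii}=\lambda>0$ of $A$, and the linearized dynamics of $v_i$ near $w$ is $\dot v_i=\lambda v_i$, decoupled from the remaining coordinates. It therefore suffices to show that on the event $\{x(n)\to w\}$ the coordinate $x_i(n)$ cannot tend to $0$, which would contradict $w_i=0$; this is precisely the content of the general (Pemantle-type) non-convergence theorem for stochastic approximation algorithms \cite{benaim1999dynamics}, whose hypotheses I would verify.

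The heart of the matter is the conditional behaviour of the number of balls $B_i(n)$ in bin $i$, which is nondecreasing. On $\{x(n)\to w\}$ one has $x_j(n)\to w_j$ for every neighbour $j\sim i$, and since $B_i\ll B_j$ there,
\[
\EE{\frac{B_i(n)-B_i(n-1)}{B_i(n-1)}}{\mathfs F_{n-1}}=\sum_{j\sim i}\frac{1}{B_i+B_j}\approx\frac{1}{nN}\sum_{j\sim i}\frac{1}{w_j}=\frac{1+\lambda}{n},
\]
where the last equality uses $\frac{\partial L}{\partial v_i}(w)=-1+\frac1N\sum_{j\sim i}\frac{1}{w_j}=\lambda$ and $N_0+(n-1)N\sim nN$. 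Thus the drift of $\log B_i(n)$ is $\approx\frac{1+\lambda}{n}$, so in expectation $B_i(n)$ grows like $n^{1+\lambda}$, far faster than the normalisation $N_0+nN\sim n$; this forces $x_i(n)\not\to 0$. To make this rigorous I would localise to a neighbourhood of $w$, pass to $\log B_i(n)-(1+\lambda-\varepsilon)\log n$, show by the martingale convergence theorem that it converges almost surely, and conclude $B_i(n)\ge n^{\,1+\lambda/2}$ eventually on the event — the desired contradiction, giving $\P{\lim_n x(n)=w}=0$.

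The main obstacle is that the noise degenerates at the boundary face: the conditional variance in the expanding direction is $\sum_{j\sim i}\frac{B_iB_j}{(B_i+B_j)^2}\sim x_i\to 0$, so the classical uniform lower bound on the noise fails and both drift and fluctuations scale multiplicatively with $x_i$. Two points then need care. First, one must establish that $B_i(n)\to\infty$ on $\{x(n)\to w\}$, so that $\log B_i$ is well behaved; this follows from a conditional Borel--Cantelli argument, since the winning probabilities $\frac{B_i}{B_i+B_j}\gtrsim\frac1n$ are not summable, whence bin $i$ wins infinitely often. Second, one must control the quadratic-variation correction of the increments of $\log B_i$, namely $\EE{(\Delta B_i/B_i)^2}{\mathfs F_{n-1}}\le \frac{\deg(i)}{B_i(n-1)}\cdot\frac{1+\lambda}{n}$, whose summability requires a quantitative almost-sure lower bound on $B_i(n)$. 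I expect this bootstrap — upgrading the in-expectation growth to an almost-sure one uniformly near the boundary — to be the delicate step, and it is exactly what the general theorem of \cite{benaim2013generalized,benaim1999dynamics} packages, so in the write-up I would invoke it after checking the instability in \eqref{condition-unstable-equilibrium} and the non-degeneracy just described.
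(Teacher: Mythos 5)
The paper itself offers no proof of this statement: Theorem \ref{theorem-zero-probability-unstable} is quoted from \cite{benaim2013generalized} (Lemma 5.2 there), with only the remark that the proof is probabilistic and follows \cite{pemantle1992vertex}. So the comparison is with that cited argument, and your core mechanism is exactly it. Using \eqref{condition-unstable-equilibrium} you pick $i\notin S$ with $w_i=0$ and $\lambda=\frac{\partial L}{\partial v_i}(w)>0$, note that $\frac1N\sum_{j\sim i}\frac{1}{w_j}=1+\lambda$ (here $w_j\ge c>0$ for $j\sim i$, since $v_i+v_j\ge c$ on $\Delta$), and observe that as long as $x(n)$ stays in a small neighborhood of $w$,
\begin{equation*}
\EE{\tfrac{B_i(n)-B_i(n-1)}{B_i(n-1)}}{\mathfs F_{n-1}}=\sum_{j\sim i}\frac{1}{B_i(n-1)+B_j(n-1)}\ \ge\ \frac{1+\lambda-\ve}{n}
\end{equation*}
for large $n$, so that $B_i(n)$ should grow like $n^{1+\lambda-\ve}$, contradicting the trivial bound $B_i(n)\le N_0+nN$. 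This super-linear-growth contradiction, the localization, the conditional Borel--Cantelli step giving $B_i(n)\to\infty$, and the identification of the expanding eigenvalue $a_{ii}=\lambda$ in \eqref{jacobian} are all correct and faithful to the Pemantle-style proof the paper points to.

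The gap is in how you propose to finish. You cannot close the delicate step by ``invoking the general theorem of \cite{benaim2013generalized,benaim1999dynamics}'': citing Lemma 5.2 of \cite{benaim2013generalized} is circular (it is the statement being proved), and the general nonconvergence theorems of \cite{benaim1999dynamics} assume the martingale noise is uniformly nondegenerate in the expanding direction --- precisely the hypothesis you correctly observe fails here, since the conditional variance is of order $x_i\to 0$. The martingale step must therefore be done by hand; fortunately, the bootstrap you fear is unnecessary. Stop the process at the exit time of the neighborhood and decompose $\sum_{k\le n}\frac{\Delta B_i(k)}{B_i(k-1)}=A_n+M_n$ into compensator plus martingale. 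Then: (i) $A_n\ge(1+\lambda-\ve)\log n-C$ by the drift bound; (ii) since $\Delta B_i(k)\le\deg(i)$ and $B_i\ge 1$, the quadratic variation satisfies $\langle M\rangle_n\le\deg(i)\sum_{k\le n}\frac{1}{B_i(k-1)}\,\EE{\tfrac{\Delta B_i(k)}{B_i(k-1)}}{\mathfs F_{k-1}}=O(\log n)$, so by the strong law for martingales $M_n$ either converges or is $o(\langle M\rangle_n)$; in both cases $M_n=o(\log n)$ almost surely, with no lower bound on $B_i$ needed; (iii) the second-order correction in $\log B_i(n)\ge\sum_{k\le n}\frac{\Delta B_i(k)}{B_i(k-1)}-\frac{\deg(i)}{2}\sum_{k\le n}\frac{\Delta B_i(k)}{B_i(k-1)^2}$ is bounded by a constant depending only on $\deg(i)$, because the integer ranges $[B_i(k-1),B_i(k))$ are pairwise disjoint, making the last sum comparable to $\sum_{b\ge 1}b^{-2}$. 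Combining (i)--(iii) gives $B_i(n)\ge n^{1+\lambda-2\ve}$ eventually on the localized event, the desired contradiction. With this replacement of your final citation, your outline becomes a complete proof.
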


\medskip
In particular, if $\Lambda$ is finite then $x(n)$ almost surely converges to a non-unstable equilibrium.
The proof is probabilistic and follows the lines of \cite[\S3 and \S4]{pemantle1992vertex}, see also \cite[\S9]{benaim1999dynamics}.

\subsection*{Non-unstable equilibria and Lyapunov functions}

Let $w\in\Lambda_S$  non-unstable. By (\ref{condition-unstable-equilibrium}),
$\frac{\partial L}{\partial v_i}(w)\leq 0$ for every $i\not\in S$. Since $\frac{\partial L}{\partial v_i}(w)=0$
for $i\in S$, we have \cite{chen2013generalized}:
\begin{equation}\label{characterization-non-unstable}
w\in \Lambda_S\text{ is non-unstable}\iff \tfrac{\partial L}{\partial v_i}(w)=0\ \ \forall i\in S\text{, and }
\tfrac{\partial L}{\partial v_i}(w)\leq 0\ \ \forall i\not\in S.
\end{equation}
In particular, every $w\in\Lambda_{[m]}$ is non-unstable.

\medskip
For every non-unstable equilibrium there is a Lyapunov function that gives extra information on the convergence of the
vector field \cite{chen2013generalized}. This fact will be used in \S4 and \S5, thus we state it
in a general form. Given $w\in\Delta_S$ and $\chi\in (0,\min_{i\in S}w_i]$,
let $\Delta^{w,\chi}=\{v\in \Delta:v_i\geq\chi,\forall i\in S\}$ (we do not require that $v_i=0$ for $i\notin S$).
$\Delta^{w,\chi}$ is a closed convex set that contains $w$ at its boundary. The next result is a summary of \cite[Lemmas 3.1 and 3.2]{chen2013generalized}.

\begin{lemma}\label{lemma-non-unstable-versus-lyapunov-functions}
Let $w\in\Lambda_S$ non-unstable. Then there is a closed interval $J=J(w,\chi)$
such that $H:\Delta^{w,\chi}\to\mathbb R$,
$H(v)=\sum_{i\in S}w_i\log v_i$, is a Lyapunov function for $J$.
\end{lemma}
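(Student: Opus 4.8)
The plan is to show directly that $H$ is a Lyapunov function by computing its derivative along the flow and recognizing it as a concavity defect of $L$, and then to identify the set where this derivative vanishes as a closed interval. First I would differentiate $H$ along an integral curve $v(t)$ of $F$. Since $F_i=v_i\frac{\partial L}{\partial v_i}$ and $v_i\ge\chi>0$ for $i\in S$ throughout $\Delta^{w,\chi}$, we get
$$
\frac{d}{dt}(H\circ v)=\sum_{i\in S}\frac{w_i}{v_i}\frac{dv_i}{dt}=\sum_{i\in S}w_i\frac{\partial L}{\partial v_i}(v).
$$
A direct computation gives $\sum_{i=1}^m v_i\frac{\partial L}{\partial v_i}(v)=-\sum_i v_i+\frac1N\sum_{\{i,j\}\in E}1=0$ on $\Delta$, which is just the tangency of $F$ to $\Delta$. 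Since $w_i=0$ for $i\notin S$, the previous display equals $\nabla L(v)\cdot w=\nabla L(v)\cdot(w-v)$, and concavity of $L$ then yields
$$
\frac{d}{dt}(H\circ v)=\nabla L(v)\cdot(w-v)\ \ge\ L(w)-L(v).
$$

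Second, I would check that $w$ is a global maximum of $L$ on $\Delta$, so the right-hand side is nonnegative. By the characterization (\ref{characterization-non-unstable}) of non-unstable equilibria, $\frac{\partial L}{\partial v_i}(w)=0$ for $i\in S$ and $\frac{\partial L}{\partial v_i}(w)\le 0$ for $i\notin S$. Concavity at $w$ gives $L(v)\le L(w)+\sum_i\frac{\partial L}{\partial v_i}(w)(v_i-w_i)$, and every summand is $\le 0$ (the terms with $i\in S$ vanish, while for $i\notin S$ one has $w_i=0$ and $v_i\ge 0$). Hence $L(v)\le L(w)$, so $\frac{d}{dt}(H\circ v)\ge 0$, with strict inequality wherever $L(v)<L(w)$. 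This already establishes the monotonicity; what remains is to describe the set where the derivative vanishes.

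Third, I would show that this zero set is a closed interval. If $\frac{d}{dt}(H\circ v)=0$ then $L(w)-L(v)\le 0$ forces $L(v)=L(w)$, so $v$ lies in the maximizer set $M:=\operatorname{argmax}_\Delta L$; conversely, if $v\in M$ then, since $M$ is convex and $L$ is constant on it, the segment $[v,w]\subseteq M$ is a level set of $L$, whence $\nabla L(v)\cdot(w-v)=0$ and the derivative vanishes. Thus $H$ is strictly increasing outside $J:=M\cap\Delta^{w,\chi}$. To see that $J$ is an interval, note that along any segment in $M$ the affine (constant) function $L$ is a sum of the terms $\log(v_i+v_j+t(d_i+d_j))$; as each such term is concave, it must itself be affine, forcing $d_i+d_j=0$ for every edge $\{i,j\}\in E$. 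For a connected graph the solutions $d$ are spanned by $\ind_A-\ind_B$ when $G$ is bipartite with parts $A,B$, and are $0$ otherwise, so the tangent space of $M$ has dimension at most one. Being a convex set inside a line, $J=M\cap\Delta^{w,\chi}$ is therefore a closed interval (possibly a point).

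I expect the third step to be the main obstacle: the concavity estimate of the first two steps yields monotonicity almost for free, but pinning the zero set down requires both directions of the equality analysis (a vanishing derivative must force $v$ into $M$, and membership in $M$ must force the derivative to vanish) together with the connectivity argument that collapses $M$ to dimension at most one. One should also verify that the integral curves under consideration remain in $\Delta^{w,\chi}$; this is routine, since $H$ is increasing and $H(v)\to-\infty$ as any coordinate $v_i$ with $i\in S$ tends to $0$, so the superlevel sets of $H$ stay uniformly away from the faces $\{v_i=0\}$.
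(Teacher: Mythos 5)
Your proof is correct, and while it shares the paper's skeleton --- compute $\frac{d}{dt}(H\circ v)=\sum_{i\in S}w_i\frac{\partial L}{\partial v_i}(v)$, show this is nonnegative, and collapse its zero set to a segment using the edge relations and bipartiteness --- the middle of the argument is organized genuinely differently. The paper treats the derivative as a function $f(v)=-1+\frac{1}{N}\sum_{\{i,j\}\in E}\frac{w_i+w_j}{v_i+v_j}$ in its own right: it proves $f$ is convex, shows $w$ is a local (hence global) minimum with $f(w)=0$ via the elementary inequality $\frac{x}{x+\ve}-1\geq-\frac{\ve}{x}$ together with \eqref{characterization-non-unstable}, and identifies $J$ with the minimizer set of $f$, whose interval structure comes from the equality case of convexity ($v_i+v_j=w_i+w_j$ on edges, then the three-case analysis). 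You instead route everything through $L$ itself: from $\nabla L(v)\cdot v=0$ on $\Delta$ you rewrite the derivative as $\nabla L(v)\cdot(w-v)$, bound it below by $L(w)-L(v)$ using concavity, and deduce from \eqref{characterization-non-unstable} that a non-unstable equilibrium is a global maximum of $L$ on all of $\Delta$, so the zero set is $\operatorname{argmax}_\Delta L\cap\Delta^{w,\chi}$; strict concavity of $\log$ plus connectivity then pins $\operatorname{argmax}_\Delta L$ inside the line $w+\mathbb{R}\,(\ind_A-\ind_B)$. Your route buys a cleaner conceptual statement --- non-unstable equilibria are exactly the global maximizers of $L$, and $J$ is the maximizer set cut by $\Delta^{w,\chi}$ --- which dovetails with the paper's later use, in \S\ref{section-bipartite}, of ${\rm int}(\mathfs J)$ as the set of global maxima of $L\restriction_{\Delta_{[m]}}$; your closing remark on invariance also addresses a point the paper leaves implicit (one quibble: monotonicity of $H$ confines the trajectory to $\{v_i\geq\delta,\ i\in S\}$ for a $\delta$ depending on the initial value of $H$, not necessarily to $\Delta^{w,\chi}$ itself, but this is harmless since your derivative identity holds on all of $\{v\in\Delta:v_i>0\ \forall i\in S\}$). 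The paper's route is more elementary, needing only convexity of $x\mapsto\frac{1}{x}$, and it directly outputs the explicit parametrization \eqref{condition-equality-2} of $J$, which \S\ref{section-equilibria} then uses to define $\mathfs J$ and to see that each $\frac{\partial L}{\partial v_i}$ is constant along it; in your write-up that parametrization is present only implicitly, through the relation $d_i+d_j=0$.
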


In particular, every orbit of $F\restriction_{\Delta^{w,\chi}}$ converges to $J$.

\begin{proof}
Inside $\Delta^{w,\chi}$ the function
$H$ is differentiable, and
$$
\frac{d}{dt}(H\circ v)=\sum_{i\in S}w_i\frac{1}{v_i}\frac{dv_i}{dt}=\sum_{i\in S}w_i\frac{\partial L}{\partial v_i}
=\sum_{i=1}^m w_i\frac{\partial L}{\partial v_i}=-1+\frac{1}{N}\sum_{\{i,j\}\in E}\frac{w_i+w_j}{v_i+v_j}\cdot
$$

Let $f:\Delta^{w,\chi}\to\mathbb R$, $f(v)=-1+\frac{1}{N}\sum_{\{i,j\}\in E}\frac{w_i+w_j}{v_i+v_j}$.
Observe that $f(w)=0$. We will show that $f(v)\geq 0$, with equality iff $v\in J$ (to be defined below).

\medskip
\noindent
{\sc Step 1:} $f$ is convex.

\medskip
Since $x>0\mapsto \frac{1}{x}$ is convex, each $v\in\Delta^{w,\chi}\mapsto \frac{w_i+w_j}{v_i+v_j}$ is convex.
Thus $f$ is the sum of convex functions.

\medskip
\noindent
{\sc Step 2:} $w$ is a global minimum of $f$.

\medskip
Since $f$ is convex, it is enough to prove that $w$ is a local minimum of $f$.
Let $v=w+(\ve_1,\ldots,\ve_m)$ with $\ve_1,\ldots,\ve_m$ small enough.  Of course, $\ve_i\geq 0$ for $i\not\in S$.
Applying the inequality $\frac{x}{x+\ve}-1\geq -\frac{\ve}{x}$ for $x,x+\ve>0$, we have
\begin{eqnarray*}
f(v)-f(w)&=&\frac{1}{N}\sum_{\{i,j\}\in E}\left[\frac{w_i+w_j}{v_i+v_j}-1\right]\\
&\geq&\frac{1}{N}\sum_{\{i,j\}\in E}-\frac{\ve_i+\ve_j}{w_i+w_j}\\
&=&-\sum_{i=1}^m \ve_i\frac{1}{N}\sum_{j\sim i}\frac{1}{w_i+w_j}\\
&=&-\sum_{i=1}^m \ve_i\left[1+\frac{\partial L}{\partial v_i}(w)\right]\\
&=&-\sum_{i=1}^m \ve_i \frac{\partial L}{\partial v_i}(w)\\
&\geq& 0,
\end{eqnarray*}
since $\ve_i\frac{\partial L}{\partial v_i}(w)=0$ for $i\in S$, and $\ve_i\frac{\partial L}{\partial v_i}(w)\leq 0$
for $i\not\in S$. Hence $w$ is a local minimum of $f$.

\medskip
\noindent
{\sc Step 3:} The set of global minima of $f$ is a closed interval $J\ni w$.

\medskip
The set of global minima of a convex function is convex. Thus if $v\in\Delta^{w,\chi}$ with $f(v)=f(w)$
then $f(tv+(1-t)w)=tf(v)+(1-t)f(w)$ for all $t\in[0,1]$. Because $x>0\mapsto\frac{1}{x}$ is strictly convex,
we get $v_i+v_j=w_i+w_j$ for all $\{i,j\}\in E$, i.e.
\begin{equation}\label{condition-equality}
v_i-w_i=-(v_j-w_j),\ \forall \{i,j\}\in E.
\end{equation}
We divide the analysis of (\ref{condition-equality}) into three cases:
\begin{enumerate}[$\circ$]
\item $G$ is not bipartite: $G$ has an odd cycle, thus (\ref{condition-equality}) implies $v=w$. Take $J=\{w\}$.
\item $G$ is unbalanced bipartite: let $V=A\cup B$ be the bipartition, $\#A\neq\#B$.
By (\ref{condition-equality}), there is $\eta\in\mathbb R$ such that
\begin{equation}\label{condition-equality-2}
v_i=\left\{
\begin{array}{ll}
w_i+\eta & \text{ if }i\in A,\\
w_i-\eta & \text{ if }i\in B.
\end{array}
\right.
\end{equation}
Summing up on $i$, we get $\eta(\#A-\#B)=0\Rightarrow\eta=0$. Take $J=\{w\}$.
\item $G$ is balanced bipartite: let $V=A\cup B$ be the bipartition, $\#A=\#B$. As in the previous case,
(\ref{condition-equality-2}) holds. Take $J=\{v\in\Delta^{w,\chi}:v\text{ satisfies (\ref{condition-equality-2})}\}$.
$J$ is a closed interval, and $f\restriction_J$ is identically zero.
\end{enumerate}
\end{proof}

We want to avoid the dependence of $J$ on $w,\chi$.

\medskip
\noindent
{\sc The interval $\mathfs J$:} $\mathfs J$ is the maximal extension of $J$ to $\Delta$.

\medskip
$\mathfs J$ is an interval whose endpoints belong to $\partial\Delta$, one of which is $w$,
and whose interior is contained in $\Delta_{[m]}$. Furthermore:
\begin{enumerate}[i)]
\item[(i)] $\mathfs J$ is uniquely determined by any of its points.
\item[(ii)] $\frac{\partial L}{\partial v_i}\restriction_{\mathfs J}$ is constant and equal to $\frac{\partial L}{\partial v_i}(w)$
for all $i$, because of (\ref{condition-equality-2}).
\end{enumerate}


\section{Not balanced bipartite graphs}

If $G$ is not balanced bipartite, then Corollary \ref{main-corollary}
holds with $\mathfs J=$ singleton \cite[Theorem 1.1]{chen2013generalized}.
We include the proof for completeness.

\medskip
\noindent
{\sc Step 1:} $L$ is strictly concave.

\medskip
We have $L(tv+(1-t)w)\geq tL(v)+(1-t)L(w)$ for all $v,w\in\Delta,t\in[0,1]$. Equality holds iff 
(\ref{condition-equality}) holds iff $v=w$, because:
\begin{enumerate}[$\circ$]
\item If $G$ is not bipartite then it has an odd cycle, hence $v=w$.
\item If $G$ is unbalanced bipartite then (\ref{condition-equality-2}) holds, hence $v=w$.
\end{enumerate}

\medskip
\noindent
{\sc Step 2:} $\Lambda$ is finite.

\medskip
$L\restriction_{\Delta_S}$ is strictly concave, because it is the restriction
of $L$ to a convex set. Thus $\Lambda_S$ is either empty or a singleton,
and $\Lambda=\bigcup_{S\subseteq[m]}\Lambda_S$ is finite. 

\medskip
\noindent
{\sc Step 3:} There is at least one non-unstable equilibrium.

\medskip
This follows directly from Theorem \ref{theorem-zero-probability-unstable}.

\medskip
\noindent
{\sc Step 4:} There is at most one non-unstable equilibrium.

\medskip
Suppose $w\neq\widetilde w$ are non-unstable equilibria. 
Let $H:\Delta^{w,\chi}\to\mathbb R$, $\widetilde H:\Delta^{\widetilde w,\chi}\to\mathbb R$
as in Lemma \ref{lemma-non-unstable-versus-lyapunov-functions}.
Take $\chi>0$ small enough such that $\Delta^{w,\chi}\cap\Delta^{\widetilde w,\chi}\neq\emptyset$.
Every orbit of $F$ starting from $\Delta^{w,\chi}\cap\Delta^{\widetilde w,\chi}$ converges
simultaneously to $w$ and $\widetilde w$, a contradiction.

\medskip
By steps 3 and 4 there is a unique non-unstable equilibrium $w=w(G)$, and
$x(n)$ converges to $w$ almost surely.

\section{Balanced bipartite graphs}\label{section-bipartite}

Let $V=A\cup B$ be the bipartition, $\#A=\#B$. We consider two cases.

\subsection*{First case: $\Lambda_{[m]}=\emptyset$}

Steps 1--3 below are in \cite[Corollary 5.2]{chen2013generalized}.

\medskip
\noindent
{\sc Step 1:} $L\restriction_{\Delta_S}$ is strictly concave for every $S\neq[m]$.

\medskip
If $L(tv+(1-t)w)=tL(v)+(1-t)L(w)$ with $v,w\in\Delta_S,t\in[0,1]$, then (\ref{condition-equality-2}) holds.
For $i\in[m]\backslash S$ we have $v_i=w_i=0$, hence $\eta=0$.

\medskip
\noindent
{\sc Step 2:} $\Lambda$ is finite.

\medskip
By step 1, if $S\neq[m]$ then $\Lambda_S$ is either empty or a singleton. Since
$\Lambda_{[m]}=\emptyset$, $\Lambda=\bigcup_{S\subseteq [m]}\Lambda_S$ is finite.

\medskip
\noindent
{\sc Step 3:} There is at least one non-unstable equilibrium.

\medskip
Again, this is consequence of Theorem \ref{theorem-zero-probability-unstable}.

\medskip
\noindent
{\sc Step 4:} There is at most one non-unstable equilibrium.

\medskip
Let $w\neq\widetilde w$ be non-unstable equilibria, let $\Delta^{w,\chi},\Delta^{\widetilde w,\chi}$
as in Lemma \ref{lemma-non-unstable-versus-lyapunov-functions}, and
$\mathfs J,\widetilde{\mathfs J}$ be the maximal intervals defined at the end of \S3.
Choose $\chi>0$ small enough so that $\Delta^{w,\chi}\cap\Delta^{\widetilde w,\chi}\neq\emptyset$.
Every orbit of $F$ starting from $\Delta^{w,\chi}\cap\Delta^{\widetilde w,\chi}$ converges to both $\mathfs J$
and $\widetilde{\mathfs J}$. Since $F$ is gradient-like they also converge to $\Lambda$,
thus $\mathfs J\cap\widetilde{\mathfs J}\cap\Lambda\not=\emptyset$.
This will give the contradiction we are looking for.

\medskip
Since $\mathfs J\cap\widetilde{\mathfs J}\not=\emptyset$ and $\mathfs J,\widetilde{\mathfs J}$
are determined by any of its points, $\mathfs J=\widetilde{\mathfs J}$.
$w$ is an endpoint of $\mathfs J$, and $\widetilde w$ is and endpoint
of $\widetilde{\mathfs J}$, thus $w,\widetilde w$ are the two endpoints of
$\mathfs J=\widetilde{\mathfs J}$. In particular, if $w_i=0$ then $\widetilde w_i>0$.
This gives that $([m]\backslash S)\cap([m]\backslash\widetilde S)=\emptyset$,
hence $S\cup\widetilde S=V$. By (\ref{characterization-non-unstable}) we get
$\mathfs J\subseteq\Lambda$: if $v\in\mathfs J$ then $\frac{\partial L}{\partial v_i}(v)=\frac{\partial L}{\partial v_i}(w)=0$
for $i\in S$, and $\frac{\partial L}{\partial v_i}(v)=\frac{\partial L}{\partial v_i}(\widetilde w)=0$ for
$i\in \widetilde S$.
In particular $\emptyset\neq {\rm int}(\mathfs J)\subset\Lambda_{[m]}$, a contradiction.

\medskip
By steps 3 and 4, there is a unique non-unstable equilibrium $w=w(G)$, and
$x(n)$ converges to $w$ almost surely.

\subsection*{Second case: $\Lambda_{[m]}\not=\emptyset$}

We will prove that there is a non-degenerate interval $\mathfs J=\mathfs J(G)$ such that
$x(n)$ converges to a point of $\mathfs J$ almost surely.

\medskip
\noindent
{\sc Step 1:} The set on non-unstable equilibria is a closed interval $\mathfs J$. 

\medskip
Remember that any $w\in\Lambda_{[m]}$ is non-unstable, since $\frac{\partial L}{\partial v_i}(w)=0$
for all $i$. Apply Lemma \ref{lemma-non-unstable-versus-lyapunov-functions} to $w$,
and let $\mathfs J$ be the maximal interval defined as in the end of \S3.
$\frac{\partial L}{\partial v_i}\restriction_{\mathfs J}$ is identically zero for all $i$, hence
$\mathfs J$ is an interval of non-unstable equilibria.

\medskip
We now show that $\mathfs J$ is {\em the} set of all non-unstable equilibria. The proof is
similar to the proof of step 4 of the first case.
Let $\widetilde w$ be a non-unstable equilibrium, and let $\widetilde{\mathfs J}$ be the maximal
interval defined as in the end of \S3. If $\chi>0$ is sufficiently small then $J\cap\widetilde J\neq\emptyset$,
thus $\mathfs J=\widetilde{\mathfs J}$. Hence $\widetilde w\in{\mathfs J}$.

\begin{remark}\label{remark-balanced-bipartite}
Step 1 above and the first case characterize, for balanced bipartite graphs, when
$\mathfs J$ is a singleton or not.
\begin{enumerate}[$\circ$]
\item $\mathfs J$ is a singleton iff there is a non-unstable equilibrium $w$ with
$\frac{\partial L}{\partial v_i}<0$ for some $i$: otherwise $w$ would define an interval of equilibria
whose interior is a subset of $\Lambda_{[m]}$.
\item $\mathfs J$ is a non-degenerate interval iff
$\frac{\partial L}{\partial v_i}=0$, $i\in[m]$, for all non-unstable equilibria:
since $\frac{\partial L}{\partial v_i}\restriction_{\Lambda_{[m]}}\equiv 0$
and $\frac{\partial L}{\partial v_i}\restriction_{\mathfs J}$ is constant, we have
$\frac{\partial L}{\partial v_i}\restriction_{\mathfs J}\equiv 0$.
\end{enumerate}
We will make use of this in the discussion of some examples, see \S\ref{section-conclusion}.
\end{remark}

\medskip
\noindent
{\sc Step 2:} If $w\in{\rm int}(\mathfs J)$ then all eigenvalues of $DF(w)$ are real, and any eigenvalue
in a transverse direction to $\mathfs J$ is negative.

\medskip
This was proved for {\em regular} balanced bipartite graphs \cite[Lemma 10.1]{benaim2013generalized}.
The question remained open for a general balanced bipartite graph.

\medskip
Let $w\in{\rm int}(\mathfs J)$, thus $\frac{\partial L}{\partial v_i}(w)=0$ for all $i$.
By (\ref{jacobian}), $DF(w)$ is the restriction 
of the matrix $B=\left(v_i\frac{\partial^2L}{\partial v_i\partial v_j}\right)$ to $T_w\Delta$.
Let $A=\left(\frac{\partial^2L}{\partial v_i\partial v_j}\right)$
be the Hessian of $L$ in the coordinates $v_1,\ldots,v_m$.
The rows of $B$ are positive multiples of the rows of $A$.

\medskip
The matrix $A$ is symmetric, thus its eigenvalues are real.
Since ${\rm int}(\mathfs J)$ is the set of global maxima of $L\restriction_{\Delta_{[m]}}$,
$A$ is negative semidefinite and zero is a simple eigenvalue, i.e. every eigenvalue in a
transverse direction to $\mathfs J$ is negative. We claim that the same is true for $B$.
Remind the inner product $(x,y)=\sum_{i=1}^m \frac{x_iy_i}{v_i}$ introduced in \S\ref{section-equilibria},
and let $\langle x,y\rangle=\sum_{i=1}^m x_iy_i$ be the canonical inner product. 

\medskip
Since $(Bx,y)=\langle Ax,y\rangle$, $B$ is self-adjoint:
$(Bx,y)=\langle Ax,y\rangle=\langle x,Ay\rangle=(x,By)$.
Thus the eigenvalues of $B$ are real. Let us prove that one of them is zero and the others
are negative.
\begin{enumerate}[$\circ$]
\item 0 is a simple eigenvalue: $B=DA$, where $D$ is the diagonal matrix with diagonal entries $v_1,\ldots,v_m$.
Since $v\in\Delta_{[m]}$, $D$ is invertible, thus the kernels of $A$ and $B$ coincide. In particular, the kernel of $B$ is one-dimensional.
\item 0 is the largest eigenvalue: let $M=\max_{i\in[m]}v_i>0$, thus $(x,x)\geq M^{-1}\langle x,x\rangle$.
Let $\lambda_1(\cdot)$ denote the largest eigenvalue of a matrix. By the variational characterization of
eigenvalues of hermitian matrices (see \cite[Theorem 4.2.2]{horn1985matrix}),
$$
\lambda_1(B)=\max_{x\neq 0}\frac{(Bx,x)}{(x,x)}\leq M\max_{x\neq 0}\frac{\langle Ax,x\rangle}{\langle x,x\rangle}=
M\lambda_1(A)=0.
$$
\end{enumerate}
 This concludes the proof of step 2.

\medskip
\noindent
{\sc Step 3:} $x(n)$ converges to a point of $\mathfs J$ almost surely.

\medskip
It is enough to prove that the interpolated orbits $X(t)$ converge to a point of $\mathfs J$ almost surely.
This is true for regular balanced bipartite graphs \cite[Theorem 1.2]{chen2013generalized}.
Here is a heuristic of the proof: since the interpolated process converges exponentially fast
(Theorem \ref{theorem-random-versus-deterministic}) and the orbits of $F$ also converge exponentially
fast (step 2), the interpolated process cannot wander around ${\mathfs J}$. Provided
these are true, the proof in \cite{chen2013generalized}
applies ipsis literis. We include it for completeness.

\medskip
For a fixed closed interval $I\subset {\rm int}(\mathfs J)$, and
a small neighborhood $U$ of $I$ in $\Delta$, there is a foliation $\{\mathfs F_x\}_{x\in U}$ such that:
\begin{enumerate}[$\circ$]
\item $\mathfs F_x$ is a submanifold with $\mathfs F_x \pitchfork \mathfs J$ at a single point $\pi(x)$.
\item $\pi(x)$ is a hyperbolic attractor for $F\restriction_{\mathfs F_x}$.
The speed of convergence depends on the negative eigenvalues of $DF(\pi(x))$.
\end{enumerate}
This is an application of the theory of invariant manifolds for normally hyperbolic sets, see
\cite[Theorem 4.1]{hirsch1977invariant}.

\medskip
The map $\pi:U\to \mathfs J$ is not necessarily a projection (it is not even linear),
but since $\mathfs F_x$ depends smoothly on $x$, if $U$ is small enough then $\pi$ is 2--Lipschitz: 
\begin{equation}\label{pi-lipschitz}
d(\pi(x),\pi(y))\leq 2d(x,y),\forall\,x,y\in U.
\end{equation}
Fix a small parameter $\varepsilon>0$ and reduce $U$, if necessary, so that
\begin{equation}\label{definition-neighborhood}
U=\{x\in\Delta:\pi(x)\in I\text{ and }d(x,\pi(x))<\varepsilon\}.
\end{equation}
Let $c=\max\{\lambda:\lambda\not=0\text{ is eigenvalue of }DF(x), x\in I\}$.
By step 2, $c<0$. Thus there is $K>0$ such that
\begin{align}\label{estimate-hyperbolicity}
d(\Phi_t(x),\pi(x))\le K e^{ct}d(x,\pi(x)), \forall\,x\in U,\forall\,t\ge 0.
\end{align}
(Remind: $\{\Phi_t\}_{t\geq 0}$ is the semiflow induced by $F$.)

\medskip
Fix an interpolated orbit $X(t)$ that does not converge to the endpoints of $\mathfs J$.
It has an accumulation point in ${\rm int}(\mathfs J)$.
Let $I\subset {\rm int}(\mathfs J)$ be an interval containing such point, and let
$U$ as in (\ref{definition-neighborhood}).

\begin{lemma}\cite[Lemma 4.4]{chen2013generalized}\label{lemma-iteration}
Assume that $X(t)\in U$. If $t,T$ are large enough, then
\begin{enumerate}[i)]
\item[{\rm (i)}] $d(\pi(X(t+T)),\pi(X(t)))<2e^{-\frac{t}{4}}$.
\item[{\rm (ii)}] $X(t+T)\in U$.
\end{enumerate}
\end{lemma}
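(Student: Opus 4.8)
The plan is to combine the three quantitative ingredients already assembled—the shadowing bound \eqref{estimate-convergence}, the normal contraction \eqref{estimate-hyperbolicity}, and the Lipschitz estimate \eqref{pi-lipschitz}—with the flow-invariance of the foliation. Since each leaf $\mathfs F_x$ is the (local) stable manifold of its basepoint $\pi(x)$, it is preserved by the semiflow, so that $\pi(\Phi_T(y))=\pi(y)$ whenever $y$ and $\Phi_T(y)$ both lie in $U$. The comparison point throughout will be $\Phi_T(X(t))$: by \eqref{estimate-convergence} it shadows $X(t+T)$, and by \eqref{estimate-hyperbolicity} it is contracted toward $\pi(X(t))$, all the while keeping the same projection $\pi(X(t))$. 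I will first establish (ii) and then read off (i), which avoids the apparent circularity of needing $X(t+T)\in U$ before invoking \eqref{pi-lipschitz}.

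First I would record the contraction along the true orbit. As $X(t)\in U$ we have $d(X(t),\pi(X(t)))<\varepsilon$, so by \eqref{estimate-hyperbolicity} together with $\pi(\Phi_T(X(t)))=\pi(X(t))$,
\[
d\big(\Phi_T(X(t)),\pi(X(t))\big)\le K e^{cT}\varepsilon .
\]
Fixing $T$ large enough that $Ke^{cT}\le\tfrac14$ places $\Phi_T(X(t))$ within $\varepsilon/4$ of $\pi(X(t))\in I$; since moreover $\pi(\Phi_T(X(t)))=\pi(X(t))\in I$, definition \eqref{definition-neighborhood} gives $\Phi_T(X(t))\in U$. Next, with this $T$ fixed, Theorem~\ref{theorem-random-versus-deterministic} (via \eqref{estimate-convergence} applied with margin $\tfrac14$) yields, for $t$ large, the shadowing bound $d(X(t+T),\Phi_T(X(t)))\le e^{-t/4}$. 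Combining these,
\[
d\big(X(t+T),\pi(X(t))\big)\le d\big(X(t+T),\Phi_T(X(t))\big)+d\big(\Phi_T(X(t)),\pi(X(t))\big)\le e^{-t/4}+\tfrac{\varepsilon}{4}<\tfrac{\varepsilon}{2}
\]
for $t$ large. Thus $X(t+T)$ lies within $\varepsilon/2$ of the point $\pi(X(t))\in I\subset\mathfs J$; using the smooth dependence of the leaves on the base point one concludes the genuine membership $X(t+T)\in U$, i.e.\ $\pi(X(t+T))\in I$ and $d(X(t+T),\pi(X(t+T)))<\varepsilon$, which is (ii).

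With (ii) in hand, both $X(t+T)$ and $\Phi_T(X(t))$ lie in $U$, so \eqref{pi-lipschitz} applies to them. Using once more $\pi(\Phi_T(X(t)))=\pi(X(t))$ and the shadowing bound,
\[
d\big(\pi(X(t+T)),\pi(X(t))\big)=d\big(\pi(X(t+T)),\pi(\Phi_T(X(t)))\big)\le 2\,d\big(X(t+T),\Phi_T(X(t))\big)\le 2e^{-t/4},
\]
which is (i).

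The main obstacle is the final geometric step of (ii): upgrading the mere proximity $d(X(t+T),\pi(X(t)))<\varepsilon/2$ to a point of $I$ into the structural statement $X(t+T)\in U$. Because $\pi$ is not a linear projection, the transverse coordinate $d(\cdot,\pi(\cdot))$ is only comparable to—not equal to—the distance to $\mathfs J$, so the crude $2$-Lipschitz bound is too lossy and one must instead use that the leaves meet $\mathfs J$ transversally with bounded angle on a small $U$, whence $d(X(t+T),\pi(X(t+T)))\le C\,d(X(t+T),\mathfs J)$ for a constant $C$ close to $1$; choosing the contraction margin $Ke^{cT}$ and the shadowing margin small enough (and taking $I$ with a little room inside ${\rm int}(\mathfs J)$ to prevent $\pi(X(t+T))$ from escaping through an endpoint) makes the right-hand side less than $\varepsilon$. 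All of this is routine once $\varepsilon$ and $U$ are shrunk appropriately, so the force of the lemma lies entirely in the interplay of \eqref{estimate-convergence} and \eqref{estimate-hyperbolicity} exhibited above.
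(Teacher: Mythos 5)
Your proposal runs on exactly the same ingredients and the same comparison point as the paper's proof: you compare $X(t+T)$ with $\Phi_T(X(t))$, using the shadowing bound \eqref{estimate-convergence}, the leaf contraction \eqref{estimate-hyperbolicity}, the invariance $\pi(\Phi_T(X(t)))=\pi(X(t))$, and the Lipschitz bound \eqref{pi-lipschitz}. The one genuine divergence is how you close part (ii), and there your route is heavier than necessary and is the only soft spot in your write-up. After obtaining $d(X(t+T),\pi(X(t)))<\varepsilon/2$ you pass to the distance from $X(t+T)$ to $\mathfs J$ and invoke a comparability estimate $d(x,\pi(x))\le C\,d(x,\mathfs J)$ (``bounded angle of the leaves''), which you yourself flag as the main obstacle and then dismiss as routine. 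That extra transversality input is avoidable: the paper bounds the transverse coordinate directly by a three-term triangle inequality. Writing $X=X(t)$,
\begin{align*}
d(X(t+T),\pi(X(t+T)))&\le d(X(t+T),\Phi_T(X))+d(\Phi_T(X),\pi(\Phi_T(X)))\\
&\quad+d(\pi(\Phi_T(X)),\pi(X(t+T)))\\
&\le 3\,d(X(t+T),\Phi_T(X))+d(\Phi_T(X),\pi(X))\\
&\le 3e^{-t/4}+Ke^{cT}\varepsilon<\varepsilon,
\end{align*}
where the last displayed term of the first line is handled by \eqref{pi-lipschitz} applied to the pair $\left(\Phi_T(X),X(t+T)\right)$ and nothing else. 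You had all the pieces for this line; routing through $d(\cdot,\mathfs J)$ is precisely what forces the additional geometric argument you could not complete cleanly.

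A second, smaller point: the ``circularity'' you reorder the proof to avoid --- needing $X(t+T)\in U$ before applying \eqref{pi-lipschitz} --- is not a real obstruction, and your reordering does not actually dodge it. The foliation and $\pi$ are defined (and $2$--Lipschitz) on the neighborhood of $I$ from which $U$ is carved out by \eqref{definition-neighborhood}; since $X(t+T)$ lies within $e^{-t/4}+Ke^{cT}\varepsilon$ of $I$, evaluating $\pi$ there is legitimate whether or not $X(t+T)\in U$ has been established. Indeed, your own proof of (ii) must evaluate $\pi(X(t+T))$ before membership in $U$ is known, so you are implicitly relying on the same fact. Finally, note that both you and the paper share the gloss that $\pi(X(t+T))$ does not drift past an endpoint of $I$: in the paper this is genuinely settled only in the induction following the lemma, via the choice $2\sum_k e^{-(t+kT)/4}<d(\pi(X_0),\mathfs J\setminus I)$, and your ``take $I$ with a little room inside ${\rm int}(\mathfs J)$'' plays the same role, so this is not a defect specific to your argument.
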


\begin{proof} To simplify the notation, denote $X(t)$ by $X$ and $X(t+T)$ by
$X(T)$.

\medskip
\noindent
(i) Since $\pi(\Phi_T(X))=\pi(X)$ and $\pi$ is 2--Lipschitz,
\begin{align*}
d(\pi(X(T)),\pi(X))=d(\pi(X(T)),\pi(\Phi_T(X)))\le
2d(X(T),\Phi_T(X)).
\end{align*}
By (\ref{estimate-convergence}), $d(X(T),\Phi_T(X))<e^{-\frac{t}{4}}$ for large $t$, therefore 
$d(\pi(X(T)),\pi(X))<2e^{-\frac{t}{4}}$ for large $t$.

\medskip
\noindent
In particular, $\pi(X(T))\in I$ for large $t$.

\medskip
\noindent
(ii) Since $\pi(X(T))\in I$, it remains to estimate $d(X(T),\pi(X(T)))$. By the triangular inequality, (\ref{pi-lipschitz}) and (\ref{estimate-hyperbolicity}), we have
\begin{eqnarray*}
d(X(T),\pi(X(T)))
&\le&d(X(T),\Phi_T(X))+d(\Phi_T(X),\pi(\Phi_T(X))) \nonumber \\
&&+d(\pi(\Phi_T(X)),\pi(X(T))) \nonumber \\
&\le&3d(X(T),\Phi_T(X))+d(\Phi_T(X),\pi(X)) \nonumber \\
&\le&3e^{-\frac{t}{4}}+Ke^{cT}d(X,\pi(X)) \nonumber \\
&\le&3e^{-\frac{t}{4}}+Ke^{cT}\varepsilon \nonumber \\
&<&\varepsilon
\end{eqnarray*}
provided $3e^{-\frac{t}{4}}<\frac{\varepsilon}{2}$ and $Ke^{cT}<\frac{1}{2}$.
\end{proof}

\medskip
The second part of the lemma allows us to apply it inductively to the points
$X_k:=X(t+kT),k\geq 0$. For that, choose $t,T$ large enough so that
$2\sum_k e^{-\frac{t+kT}{4}}<d(\pi(X_0),\mathfs J\backslash I)$. By Lemma \ref{lemma-iteration},
if $X_k\in U$ then $X_{k+1}\in U$ and $d(\pi(X_{k+1}),\pi(X_k))$ $<2e^{-\frac{t+kT}{4}}$.
Thus $\pi(X_k)$ converges, say $\lim \pi(X_k)=\widetilde x$.

\medskip
The proof of Lemma \ref{lemma-iteration}(ii) also gives that
\begin{equation*}
 d(X_k,\pi(X_k))\leq 3e^{-\frac{t+(k-1)T}{4}}+Ke^{cT}d(X_{k-1},\pi(X_{k-1})), \ k\geq 1.
\end{equation*}
Let $\lambda=Ke^{cT}$, thus:
\begin{eqnarray*}
 d(X_k,\pi(X_k))&\leq& 3e^{-\frac{t}{4}}\left( e^{-\frac{(k-1)T}{4}}+\lambda e^{-\frac{(k-2)T}{4}} +\cdots+\lambda^{k-1}\right)+\lambda^k d(X_0,\pi(X_0))  \nonumber  \\
&\le& 3e^{-\frac{t}{4}}k \left(\max{ \left\{e^{-\frac{T}{4}},\lambda\right\}} \right)^{k-1}+\lambda^k d(X_0,\pi(X_0)).  \nonumber  
\end{eqnarray*}
When $T$ is large, $\max{ \{e^{-\frac{T}{4}},\lambda\}} <1$, hence $\lim d(X_k,\pi(X_k))=0$. 
Since $d(X_k,\widetilde x)\leq d(X_k,\pi(X_k)) +d(\pi(X_k),\widetilde x)$, it follows that
$\lim X_k=\widetilde x\in I$.

\medskip
Now let $s\in[t+kT,t+(k+1)T)$. By the triangular inequality and (\ref{estimate-convergence})
\begin{eqnarray*}
d(X(s),\widetilde x)&=&d(X(s),\Phi_{s-(t+kT)}(\widetilde x))\\
&\leq&d(X(s),\Phi_{s-(t+kT)}(X_k))+d(\Phi_{s-(t+kT)}(X_k),\Phi_{s-(t+kT)}(\widetilde x))\\
&\leq&e^{-\frac{t+kT}{4}}+c(T)d(X_k,\widetilde x),
\end{eqnarray*}
where $c(T)>0$ is the supremum of the Lipschitz constants of $\Phi_\delta,\delta\in[0,T]$.
Therefore $X(t)$ converges to $\widetilde x$.

\section{Concluding remarks}\label{section-conclusion}

\subsection*{Some examples}

Consider the graphs in Figure \ref{figure-examples}.
We show that all cases considered in the proof of Corollary \ref{main-corollary}
occur. Remind: every $v\in\Lambda_{[m]}$ is non-unstable.

\begin{figure}[hbt!]
\centering
\def\svgwidth{12cm}
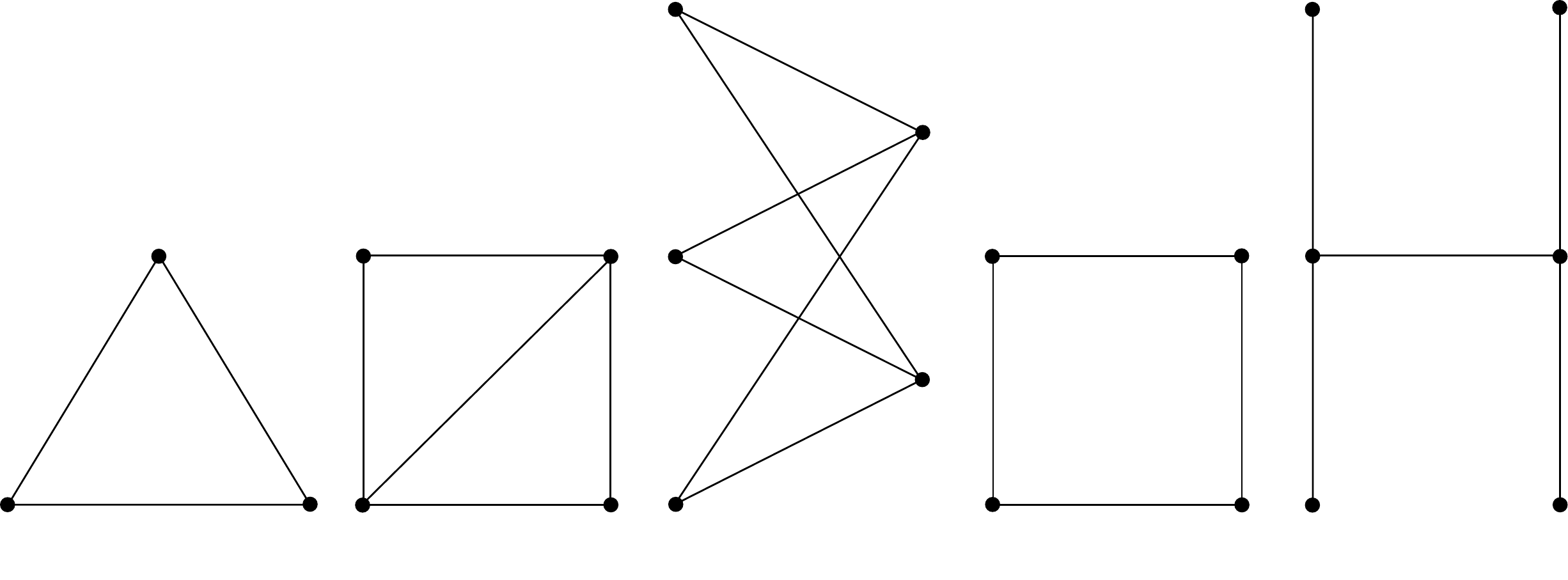
\caption{(a)--(b) are not bipartite; (c) is unbalanced bipartite; (d)--(e) are balanced bipartite.}\label{figure-examples}
\end{figure}

\medskip
\noindent (a) The triangle is not bipartite and $(\frac{1}{3},\frac{1}{3},\frac{1}{3})\in\Lambda_{[m]}$,
thus $\mathfs J=\{(\frac{1}{3},\frac{1}{3},\frac{1}{3})\}$.
Similarly, complete graphs and cycles of odd length satisfy $\mathfs J=$ uniform distribution.

\medskip
\noindent (b) The graph is not bipartite and $(0,\frac{1}{2},0,\frac{1}{2})$ is a non-unstable
equilibrium (since $\frac{\partial L}{\partial v_1}=-\frac{1}{5}$),
thus $\mathfs J=\{(0,\frac{1}{2},0,\frac{1}{2})\}$.


\medskip
\noindent (c) The graph is unbalanced bipartite and $(0,0,0,\frac{1}{2},\frac{1}{2})$ is a non-unstable
equilibrium (since $\frac{\partial L}{\partial v_1}=-\frac{1}{3}$),
thus $\mathfs J=\{(0,0,0,\frac{1}{2},\frac{1}{2})\}$. More generally, if $K_{i,j}$ is the complete bipartite
graph and if $i>j$, then $\mathfs J=\{(0,\ldots,0,\frac{1}{j},\ldots,\frac{1}{j})\}$.


\medskip
\noindent (d) The square is balanced bipartite and $(\frac{1}{4},\frac{1}{4},\frac{1}{4},\frac{1}{4})\in\Lambda_{[m]}$,
thus $\mathfs J=\{(p,q,p,q)\in\Delta\}$. A similar argument is true for any cycle of even length.

%

\medskip
\noindent (e) The graph is balanced bipartite and $(0,0,0,0,\frac{1}{2},\frac{1}{2})$
is a non-unstable equilibrium, since $\frac{\partial L}{\partial v_1}=-\frac{3}{5}$.
By Remark \ref{remark-balanced-bipartite}, $\mathfs J=\{(0,0,0,0,\frac{1}{2},\frac{1}{2})\}$.

%
%
%
%
%
%

\subsection*{Future directions}

The model introduced in \cite{benaim2013generalized} is more general
than that defined in (\ref{transition-probability}): fix $\alpha>0$ and update the bins
according to the rule
\begin{align*}
\P{i\text{ is chosen among }\{i,j\}\text{ at step }n}=\dfrac{B_i(n-1)^\alpha}{B_i(n-1)^\alpha+B_j(n-1)^\alpha}\,\cdot
\end{align*}

\medskip
If $\alpha<1$ then there is $w=w(G)$ such that $x(n)$ converges to $w$ almost surely
\cite[Theorem 1.4]{benaim2013generalized}. For $\alpha=1$ the present note and
\cite{benaim2013generalized,chen2013generalized} establish convergence.

%

\medskip
\noindent
{\sc Question 1:} If $\alpha=1$ and $G$ is balanced bipartite, what is the distribution 
of the limit of $x(n)$?

\medskip
In classical P\'olya's urn the limit has a beta distribution, see \cite[Thm 2.1]{pemantle2007survey}.

\medskip
\noindent
{\sc Question 2:} For $\alpha>1$, does $x(n)$ converge almost surely?

\medskip
\noindent
{\sc Question 3:} For hypergraph-based P\'olya's urns \cite[\S9.2]{benaim2013generalized},
does $x(n)$ converge almost surely?

\bibliographystyle{amsalpha}
\bibliography{polya-urn-linear}

\end{document}